\newtheorem{theorem}{Theorem}[section]
\newtheorem{lemma}[theorem]{Lemma}
\newtheorem{proposition}{Proposition}[section]
\newtheorem{corallary}{Corallary}[section]
\theoremstyle{definition}
\newtheorem{definition}[theorem]{Definition}
\theoremstyle{remark}
\numberwithin{equation}{section}
\begin{document}

\title{$\delta$-$J$-IDEALS OF COMMUTATIVE RINGS}


\author{Shuai Zeng}
\address{ Shuai Zeng\\
School of Mathematics, Liaoning University, Shenyang, 110036, Liaoning Province, China }
\email{shuaizseng@qq.com}

\author{Weiwei Wang}
\address{Weiwei Wang\\ School of Mathematics, Liaoning University, Shenyang, 110036, Liaoning Province, China }
\email{wweiweimath@163.com}

\author{Jiantao Li }
\address{Jiantao Li (Corresponding author)\\
 School of Mathematics, Liaoning University, Shenyang, 110036, Liaoning Province, China }
\email{jtlimath@qq.com}
\thanks{This work was supported by NSFC Grant 11701247.}

\subjclass[2010]{13A15}

\keywords{$\delta$-$J$-ideal, $\delta$-primary ideal, ideal expansion, idealization}

\date{}

\dedicatory{}

\begin{abstract}
	Let $\mathcal{I}(R)$ be the set of all ideals of a ring $R$, $\delta$ be an expansion function of $\mathcal{I}(R)$. In this paper, the $\delta$-$J$-ideal of a commutative ring is defined, that is, if $a, b\in R$ and $ab\in I\in \mathcal{I}(R)$, then $a\in J(R)$  (the Jacobson radical of $R$) or  $b\in \delta(I)$. Moreover, some properties of $\delta$-$J$-ideals are discussed,such as localizations, homomorphic images, idealization and so on.

\end{abstract}

\maketitle

\section{Introduction}

Throught this paper, let $R$ be a commutative ring with nonzero identity unless otherwise noted.

Since the prime ideals of commutative rings play a critical role in the area of commutative algebra,
various generalizaions of prime ideals have been explored in several studies.
 In \cite{DZ}, D. Zhao introduced the extension function $\delta$ of ideals and defined the $\delta$-primary ideals of rings.
Let $\mathcal{I}(R)$ be the set of all ideals of a ring $R$.
A function $\delta$ from $\mathcal{I}(R)$ to $\mathcal{I}(R)$  is said to be an expansion function or an ideal expansion, if it takes ideals to ideals with $I \subseteq \delta(I)$
and if $I\subseteq J$ for some $I, J\in \mathcal{I}(R) $, then $\delta(I)\subseteq\delta(J)$.
A $\delta$-primary ideal $I$ of a ring $R$ is an ideal satisfying that if $ab\in I$ and $a\notin I$ for some $a, b\in R$, then $b\in \delta(I)$.
In \cite{USK}, the authors introduced the concept of $n$-ideals: a proper ideal is called $n$-ideal if for any $a, b\in R$ with $ab\in I$, then $a\in \sqrt{0}$ or $b\in I=\delta_{0}(I)$.
For an ideal $I \in \mathcal{I}(R)$, $\sqrt{I}$ denots the radical of $I$, that is, $\sqrt{I}=\{r\in R| \text{ there exists } n\in \mathbb{N} \text{ such that } r^n\in I\}$.
Recently, E. Yetkin Celikel and G. Ulucak introduced the concepts of  $\delta$-$n$-ideals and quasi-$J$ ideals of a commutative ring $R$ in \cite{EYC,EG1}.
An ideal $I$ is said to be a $\delta$-$n$-ideal if for some $a, b\in R$ and $ab\in I$ with $a \notin \sqrt{0}$, then $b\in \delta(I)$.
For an ideal $I$, it is called a quasi-$J$-ideal of a ring if $ab\in I$ for some $a, b\in R$, then $a\in J(R)$ or $b\in \sqrt{I}=\delta_{1}(I)$. And this concept is a generalization of $J$-ideals appeared in \cite{KB}. 

In this paper, we defined the $\delta$-$J$-ideal of a commutative ring,  that is, if $a, b\in R$ and $ab\in I\in \mathcal{I}(R)$, then $a\in J(R)$ (the Jacobson radical of $R$)   or  $b\in \delta(I)$. This definition generalizes the results in \cite{EYC,EG1}.
More precisely, on the one side, $\sqrt{0}$ is always included in $J(R)$, thus this concept generalizes the  $\delta$-$n$-ideal in \cite{EG1}.
On the other hand, $\sqrt{I}=\delta_{1}(I)$ becomes a special expansion function of $\mathcal{I}(R)$, thus the concept of $\delta$-$J$-ideals generalizes the quasi-$J$-ideal in \cite{EYC}. In addtion, we also present some properties about the operations of $\delta$-$J$-ideals  such as localizations, homomorphic images, idealization and so on.

	\section{Properties of $\delta$-$J$-ideals}
Let $R$ be a commutative ring with nonzero identity $1$,  $\mathcal{I}(R)$ be the set of all ideals of $R$, $\delta$ be an expansion function of $\mathcal{I}(R)$, and $J(R)$ be the Jacobson radical of $R$.
For more definitions and properties of idealization used in this paper, one can refer to literatures \cite{MF,DDM,JH}.

Now, let's present the definition of the $\delta$-$J$-ideal.
\begin{definition}
	Let $R$ be a ring, a proper ideal $I$ of $R$ is said to be a $\delta$-$J$-ideal, if whenever $a, b \in R$, $ab \in I$,  then $a\in J(R)$  or $b\in \delta(I)$. 
\end{definition}

This definition is different to the conccepts of $\delta$-$n$-ideals and quasi-$J$-ideals, and it is more general than these two concepts.

In the following result, we presented some equivalent conditions of $\delta$-$J$-ideal of a ring in some special circumstance.
\begin{theorem}\label{th1}
	Let $I$ be a proper ideal of ring $R$. Then the following statements are equivalent:
	\begin{enumerate}[(1)]
		\item$I$ is a $\delta$-$J$-ideal of $R$.
		\item If $a \in R$ and $K$ is an ideal of $R$ with $aK$$\subseteq$$I$, then $a\in J(R)$ or $K\subseteq \delta (I)$ if $a\notin J(R)$.
		\item If $K$ and $L$ are ideals of $R$ with $KL \subseteq I$, then $K \subseteq J(R)$ or $L \subseteq \delta(I)$.
	\end{enumerate}
\end{theorem}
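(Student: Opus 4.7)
The plan is to establish the equivalences by a cyclic chain of implications $(1)\Rightarrow(2)\Rightarrow(3)\Rightarrow(1)$. Each step is quite short, so I will organize the argument around the idea that one can freely pass between elementwise statements and ideal-theoretic statements by using principal ideals in one direction and element-by-element inspection in the other. Note that the parenthetical ``if $a\notin J(R)$'' in statement (2) is just a restatement of the disjunction, so I will read (2) as: whenever $aK\subseteq I$ and $a\notin J(R)$, one has $K\subseteq\delta(I)$.

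For $(1)\Rightarrow(2)$, I assume $aK\subseteq I$ with $a\notin J(R)$ and pick an arbitrary $b\in K$; then $ab\in I$, so by (1) either $a\in J(R)$ (excluded) or $b\in\delta(I)$. Since $b$ was arbitrary, $K\subseteq\delta(I)$. For $(2)\Rightarrow(3)$, I assume $KL\subseteq I$ with $K\not\subseteq J(R)$, choose some $a\in K\setminus J(R)$, and observe $aL\subseteq KL\subseteq I$; applying (2) to this $a$ and the ideal $L$ yields $L\subseteq\delta(I)$.

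For $(3)\Rightarrow(1)$, I take $a,b\in R$ with $ab\in I$ and consider the principal ideals $(a)$ and $(b)$. Then $(a)(b)=(ab)\subseteq I$, so (3) forces $(a)\subseteq J(R)$ or $(b)\subseteq\delta(I)$; in the first case $a\in J(R)$, in the second $b\in\delta(I)$, which is exactly condition (1). The only points to be mildly careful about are that $\delta(I)$ and $J(R)$ are honest ideals (so that ``contains every generator'' is the same as ``contains the whole ideal''), and that $\delta$ is monotone, which is built into the definition of an expansion function.

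I do not expect any real obstacle here: the argument is a direct translation between pointwise and idealwise phrasings, with the principal-ideal trick doing the work in the last step. If anything, the main thing to state cleanly is the reading of (2) so the reader sees that the ``$a\in J(R)$ or'' is genuinely the same as the assumption ``$a\notin J(R)\Rightarrow K\subseteq\delta(I)$,'' so that the $(1)\Rightarrow(2)$ step and $(2)\Rightarrow(3)$ step match up verbatim with the hypotheses they need.
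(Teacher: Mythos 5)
Your proposal is correct and follows essentially the same route as the paper: the cyclic chain $(1)\Rightarrow(2)\Rightarrow(3)\Rightarrow(1)$, with elementwise inspection of $K$ for the first step, picking $a\in K\setminus J(R)$ for the second, and the principal-ideal substitution $K=\langle a\rangle$, $L=\langle b\rangle$ for the third. No substantive differences.
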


\begin{proof}
	(1)$\Rightarrow$(2): Suppose that $I$ is a $\delta$-$J$-ideal of $R$, $aK\subseteq I$ and $a \notin J(R)$. Then if $as \in I$, for any $s\in K$, it follows that $s\in \delta(I) $.  Furthermore, $K\subseteq \delta(I)$.
	
	(2)$\Rightarrow$(3): Suppose that $KL\subseteq I$ and $K\nsubseteq J(R)$. Then there exists $a \in K\setminus J(R)$. Since $aL \subseteq I$ and $a \notin J(R)$, we have $L\subseteq \delta(I)$ by $(2)$.
	
	(3)$\Rightarrow$(1): Suppose that $a, b \in R$ and $ab \in I$, the result follows by letting $K= \langle a \rangle $ and $L= \langle b \rangle $ in $(3)$.
\end{proof}

\begin{proposition}\label{prop1}
	Let $I$ be an ideal of $R$ with $\delta(I)\neq R$, then the following statements are equivalent:
	\begin{enumerate}[(1)]
		\item $I$ is a $\delta$-$J$-ideal of $R$.
		\item $I\subseteq J(R)$ and if whenever $a, b \in R$ with $ab \in I$, the $a\in J(I)$ or $b\in \delta(I)$.
	\end{enumerate}
\end{proposition}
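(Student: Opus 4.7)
The plan is to observe that the condition $\delta(I)\neq R$ is what forces $I\subseteq J(R)$ automatically, so the equivalence reduces to a single short argument together with an essentially tautological converse.

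For the direction (1)$\Rightarrow$(2), I would take an arbitrary $x\in I$ and write $x=x\cdot 1 \in I$. Since $I$ is assumed to be a $\delta$-$J$-ideal, the definition applied to the pair $(x,1)$ gives either $x\in J(R)$ or $1\in\delta(I)$. The second option is ruled out, because $1\in\delta(I)$ would make $\delta(I)=R$, contradicting the standing hypothesis $\delta(I)\neq R$. Hence $x\in J(R)$, and this proves the containment $I\subseteq J(R)$. The remaining clause in (2), namely the implication for $a,b\in R$ with $ab\in I$, is literally the defining property of a $\delta$-$J$-ideal, so it carries over without any work.

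For the direction (2)$\Rightarrow$(1), there is essentially nothing to do: the second half of condition (2) is word-for-word the definition of a $\delta$-$J$-ideal, so (1) follows at once. The containment $I\subseteq J(R)$ is extra information that is not needed here.

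The only subtle point, which I expect to be the entire substance of the proof, is the use of $1\in R$ to feed the pair $(x,1)$ into the $\delta$-$J$-ideal property; this is the reason the hypothesis $\delta(I)\neq R$ is indispensable. No obstacle beyond that should arise, and I do not expect to need Theorem \ref{th1} here, though one could alternatively phrase the key step ideal-theoretically by taking $K=I$ and $L=\langle 1\rangle$ in Theorem \ref{th1}(3).
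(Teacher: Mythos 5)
Your argument for the containment $I\subseteq J(R)$ in (1)$\Rightarrow$(2) is exactly the paper's: feed the pair $(x,1)$ into the definition and use $\delta(I)\neq R$ to rule out $1\in\delta(I)$. The problem is with how you treat the second clause of (2). That clause is \emph{not} ``word-for-word the definition of a $\delta$-$J$-ideal'': it requires $a\in J(I)$ (the Jacobson radical of the ideal $I$, i.e.\ the intersection of the maximal ideals containing $I$), not $a\in J(R)$. Since $J(R)\subseteq J(I)$, the forward direction still goes through --- a $\delta$-$J$-ideal satisfies the a priori weaker disjunction with $J(I)$ --- but this should be said rather than asserting the two conditions coincide.

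The genuine gap is in (2)$\Rightarrow$(1), where you claim the containment $I\subseteq J(R)$ ``is extra information that is not needed here.'' It is precisely what is needed. From the second clause of (2) alone you only obtain $a\in J(I)$ or $b\in\delta(I)$, and $J(I)$ can be strictly larger than $J(R)$ for a general ideal $I$, so this does not by itself give the defining property of a $\delta$-$J$-ideal. The paper's proof uses $I\subseteq J(R)$ to get $J(I)\subseteq J(J(R))=J(R)$, so that $a\notin J(R)$ forces $a\notin J(I)$ and the second clause then yields $b\in\delta(I)$. Without this step your converse does not close. (If you were silently reading $J(I)$ as a typo for $J(R)$, the proposition would indeed reduce to the triviality you describe, but the paper's own proof, which manipulates $J(I)$ and $J(J(R))$ explicitly, shows the notation is intentional.)
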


\begin{proof}
	(1)$\Rightarrow$(2): Suppose that $I$ be a $\delta$-$J$-ideal of $R$. Assume that $I\nsubseteq J(R)$. 
Then there exists an element $a\in R$ with $a\in I\setminus J(R)$. Since $a= a\cdot 1\in I$ and $a\notin J(R)$, it follows that $1\in \delta(I)$, which is a contradition. 
Thus $I\subseteq J(R)$, and the other part in $(2)$ follows clearly since $J(R)\subseteq J(I)$.
	
	(2)$\Rightarrow$(1): Suppose that $ab \in I$ and $a\notin J(R)$. Since $I \subseteq J(R)$, we conclude that $J(I)\subseteq J(J(R))= J(R)$ and $a \notin J(I)$. 
Thus, $b\in \delta(I)$,  $I$ is a $\delta$-$J$-ideal of $R$.
	
\end{proof}
Next, we characterize the ring which every proper ideal is a $\delta$-$J$-ideal.

\begin{theorem}
	Let $R$ be a ring such that $\delta(I)\neq R$ for all proper ideal $I$ of  $R$. Then the following statements are equivalent:
	\begin{enumerate}[(1)]
		\item $R$ is a quasi-local ring with maximal ideal $M= J(R)$.
		\item Every proper principal ideal is a $\delta$-$J$-ideal of $R$.
		\item Every proper ideal is a $\delta$-$J$-ideal of $R$.
	\end{enumerate}
\end{theorem}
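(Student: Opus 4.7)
The plan is to prove the cycle $(1)\Rightarrow(3)\Rightarrow(2)\Rightarrow(1)$, since $(3)\Rightarrow(2)$ is immediate from specializing to principal ideals. The hypothesis $\delta(I)\neq R$ for every proper $I$ will be used in the direction that invokes Proposition \ref{prop1}.

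For $(1)\Rightarrow(3)$, I would assume $R$ is quasi-local with unique maximal ideal $M=J(R)$, take an arbitrary proper ideal $I$ together with elements $a,b\in R$ with $ab\in I$, and argue that whenever $a\notin J(R)=M$ the element $a$ must lie outside the unique maximal ideal and therefore be a unit. Then $b=a^{-1}(ab)\in I\subseteq \delta(I)$, which is exactly what is required for $I$ to be a $\delta$-$J$-ideal. This step is essentially bookkeeping and should be very short.

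The substantive direction is $(2)\Rightarrow(1)$. Here I would take any non-unit $a\in R$, note that $\langle a\rangle$ is a proper principal ideal and hence a $\delta$-$J$-ideal by hypothesis, and then invoke Proposition \ref{prop1}: since the standing assumption guarantees $\delta(\langle a\rangle)\neq R$, we conclude $\langle a\rangle\subseteq J(R)$, so every non-unit lies in $J(R)$. Consequently $R\setminus J(R)$ consists entirely of units, which forces $R$ to be quasi-local with unique maximal ideal $J(R)$ (any maximal ideal must contain only non-units, hence be contained in $J(R)$, hence equal $J(R)$ by maximality).

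The main obstacle I anticipate is a subtle one rather than a computational one: one has to be careful to actually use the hypothesis $\delta(I)\neq R$ for \emph{all} proper $I$, because Proposition \ref{prop1} only gives $I\subseteq J(R)$ under that assumption. Applying it to each principal $\langle a\rangle$ with $a$ a non-unit is the precise place this hypothesis is needed; without it one could not rule out a principal $\delta$-$J$-ideal escaping the Jacobson radical, and the characterization of quasi-local rings would fail. Once this is handled, the remainder of the argument is routine.
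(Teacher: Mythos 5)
Your argument is correct, but you traverse the cycle in the opposite direction from the paper, and the key implication is carried by a different step. The paper proves $(1)\Rightarrow(2)\Rightarrow(3)\Rightarrow(1)$: its substantive step is $(2)\Rightarrow(3)$, which, given $ab\in I$ with $a\notin J(R)$, applies the hypothesis to the principal ideal $\langle ab\rangle$ and extracts $b\in\delta(\langle ab\rangle)\subseteq\delta(I)$; the closing step $(3)\Rightarrow(1)$ then applies Proposition \ref{prop1} to a maximal ideal $I$ to get $I\subseteq J(R)$, hence $I=J(R)$. You instead prove $(1)\Rightarrow(3)\Rightarrow(2)\Rightarrow(1)$, making $(3)\Rightarrow(2)$ trivial and concentrating the work in $(2)\Rightarrow(1)$, where you apply Proposition \ref{prop1} to $\langle a\rangle$ for each non-unit $a$ to conclude that every non-unit lies in $J(R)$. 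Both routes are sound and of comparable length. The paper's $\langle ab\rangle$ trick shows directly that the $\delta$-$J$ property passes from principal ideals to arbitrary ones without detouring through the quasi-local characterization, which is a reusable observation; your route has the advantage of isolating exactly where the standing hypothesis $\delta(I)\neq R$ enters (only in $(2)\Rightarrow(1)$), and your remark that without it a principal $\delta$-$J$-ideal could escape $J(R)$ correctly identifies why that hypothesis is not removable.
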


\begin{proof}
	(1)$\Rightarrow$(2): Suppose that $(R, J(R))$ is a quasi-local ring. 
Then every element of $R$ is either an unit or belongs to $J(R)$. 
Let $I=\langle x\rangle$ be a principal ideal and let $a, b\in R$ such that $ab\in I$ and $a\notin J(R)$. Then $a$ is an unit and $b\in I\subseteq \delta(I)$. So $I$ is a $\delta$-$J$-ideal.
	
	(2)$\Rightarrow$(3): Let $I$ be a proper ideal of $R$. Suppose that $a,b \in  R$ with $ab\in I$ and $a\notin J(R)$. Put $J=\langle ab\rangle$.Then $J$ is a $\delta$-$J$-ideal by (2). Therefore, $b\in \delta(J)\subseteq \delta(I)$, $I$ is also a $\delta$-$J$-ideal.
	
	(3)$\Rightarrow$(1): Let $I$ be a maximal ideal of $R$. Then $I$ is a $\delta$-$J$-ideal by assumption, thus $I=J(R)$ by Proposition \ref{prop1}.
\end{proof}
The following lemma gives a relationship between $\delta$-primary ideal and $\delta$-$J$-ideal.  We also consider the connection between maximal ideals and $\delta$-$J$-ideals.

\begin{lemma}
	Let $\delta$ be an expansion of ideals of $R$. Then the followings are hold:
	
	\begin{enumerate}[(1)]
		\item Let $I$ be a $\delta$-primary ideal of $R$ with $\delta(I)\neq R$. Then $I$ is a $\delta$-$J$-ideal of $R$ if and only if $I\subseteq J(R)$.
		\item Let $I$ be a maximal ideal of $R$ with $\delta(I)\neq R$. Then $I$ is a $\delta$-$J$-ideal of $R$ if and only if $I=J(R)$.
		
	\end{enumerate}
\end{lemma}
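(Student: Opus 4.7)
The plan is to handle part (1) first by leveraging Proposition \ref{prop1}, which already packages the fact that any $\delta$-$J$-ideal (with $\delta(I)\ne R$) must sit inside $J(R)$. For the forward direction of (1), I would simply invoke Proposition \ref{prop1} to conclude $I\subseteq J(R)$. For the reverse direction, I would start with $ab\in I$, apply the $\delta$-primary hypothesis to split into cases $a\in I$ or $b\in\delta(I)$, and observe that in the first case $a\in I\subseteq J(R)$, so in either case the $\delta$-$J$-ideal condition is verified.

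For part (2), the idea is to reduce to part (1). The key observation is that any maximal ideal is prime, and any prime ideal $P$ is automatically $\delta$-primary: if $ab\in P$ with $a\notin P$ then $b\in P\subseteq\delta(P)$. So the hypothesis of (1) is available. The forward direction then gives $I\subseteq J(R)$, and since $J(R)$ is contained in every maximal ideal, in particular $J(R)\subseteq I$, yielding $I=J(R)$. For the converse, if $I=J(R)$ is maximal then $I$ is $\delta$-primary (as above) and trivially $I\subseteq J(R)$, so part (1) delivers that $I$ is a $\delta$-$J$-ideal.

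I do not anticipate any real obstacle: both parts reduce to a careful bookkeeping of definitions and the properties of $J(R)$ as the intersection of all maximal ideals. The only subtle point worth stating explicitly is that "prime $\Rightarrow$ $\delta$-primary" for any expansion $\delta$, which follows from the monotonicity-free inclusion $P\subseteq\delta(P)$ that is built into the definition of an ideal expansion; once this is noted, part (2) is a one-line consequence of part (1) together with $J(R)=\bigcap_{M\in\mathrm{Max}(R)} M$.
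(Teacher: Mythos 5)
Your proposal is correct and follows essentially the same route as the paper: part (1) is proved by combining Proposition \ref{prop1} (for the forward direction) with the $\delta$-primary case split plus $I\subseteq J(R)$ (for the converse), and part (2) reduces to part (1) via the observations that a maximal ideal is $\delta$-primary and that $J(R)$ is contained in every maximal ideal. Your explicit remark that prime implies $\delta$-primary because $P\subseteq\delta(P)$ is exactly the step the paper leaves implicit when it asserts that the maximal ideal $I$ is $\delta$-primary.
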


\begin{proof}
	(1) Suppose that $I\subseteq J(R)$. Choose $a, b \in R$ with $ab\in I$ and $a\notin J(R)$, hence $a\notin I$. 
Since $I$ is a $\delta$-primary ideal, we have $b\in \delta(I)$, that is,  $I$ is a $\delta$-$J$-ideal. The other side is obvious.

	(2) Suppose that $I$ is a $\delta$-$J$-ideal. Since $I$ is maximal, we have $J(R)\subseteq I$. Then $I$ is a $\delta$-primary ideal. It follows from (1) that $I\subseteq J(R)$. Thus the equality $I=J(R)$ holds. Conversely, if $I=J(R)$, then $I$ is a $\delta$-$J$-ideal.
	\end{proof}

Now, we recall the concept of ideal quotient. Let $I, J$ be ideals of $R$, their ideal quotient is $(I:J)=\{x\in R|xJ\subseteq I\}$. If $J$ is a principal ideal $\langle x \rangle$, we shall write $(I:x)$ in place of $(I:\langle x \rangle)$.

\begin{lemma}\label{lemma2}
	Let $\delta$ be an expansion of $\mathcal{I}(R)$. If $I$ is a $\delta $-$J$-ideal of $R$ such that $(\delta(I): x) \subseteq \delta ((I: x))\neq R$ for all $x \notin R\setminus \delta(I)$, then $(I: x)$ is a $\delta$-$J$-ideal.
\end{lemma}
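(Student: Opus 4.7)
The plan is to unwind the definition of the ideal quotient and reduce the problem to applying the $\delta$-$J$-property of $I$ itself, then invoke the containment hypothesis $(\delta(I):x)\subseteq\delta((I:x))$ to translate the conclusion back. I will also read the condition ``$x\notin R\setminus\delta(I)$'' as the evident typo ``$x\in R\setminus\delta(I)$,'' since otherwise the hypothesis is vacuous.

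First I would fix such an $x\in R\setminus\delta(I)$ and observe that the proper-ideal requirement for $(I:x)$ is immediate from the hypothesis $\delta((I:x))\neq R$, because $(I:x)\subseteq\delta((I:x))$ by the definition of an expansion function. Then I would take arbitrary $a,b\in R$ with $ab\in(I:x)$ and aim to show that either $a\in J(R)$ or $b\in\delta((I:x))$.

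The key move is to rewrite $ab\in(I:x)$ as $abx\in I$ and then regroup as $a\cdot(bx)\in I$. Since $I$ is a $\delta$-$J$-ideal, either $a\in J(R)$, in which case we are done, or $bx\in\delta(I)$. In the second case $b\in(\delta(I):x)$, and the standing hypothesis $(\delta(I):x)\subseteq\delta((I:x))$ gives $b\in\delta((I:x))$, completing the verification.

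The argument is essentially bookkeeping, so I do not foresee a real obstacle; the only point requiring care is making sure the expansion and Jacobson containments line up correctly and that the propriety of $(I:x)$ is genuinely guaranteed by the assumption $\delta((I:x))\neq R$, which is why the hypothesis is stated with that inequality rather than with $(I:x)\neq R$ directly.
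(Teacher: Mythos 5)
Your argument is correct and is essentially identical to the paper's own proof: both rewrite $ab\in(I:x)$ as $a(bx)\in I$, apply the $\delta$-$J$-property of $I$ to get $a\in J(R)$ or $bx\in\delta(I)$, and then use $(\delta(I):x)\subseteq\delta((I:x))$ to conclude. Your extra remarks on the typo in ``$x\notin R\setminus\delta(I)$'' and on why $\delta((I:x))\neq R$ guarantees properness of $(I:x)$ are sensible clarifications that the paper leaves implicit.
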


\begin{proof}
	Suppose that $a, b\in R$ with $ab\in (I:x)$ and $a\notin J(R)$. Since $abx \in I$ and $I$ is $\delta$-$J$-ideal, we have $bx \in \delta(I)$. Thus $b\in (\delta(I): x)\subseteq \delta((I: x))$, $(I: x)$ is a $\delta$-$J$-ideal.
\end{proof}

\begin{theorem}\label{th6}
	Let $I$ be a maximal $\delta$-$J$-ideal of $R$ and $(\delta(I): x)\subseteq \delta((I: x))$, for all $x\notin R\setminus(\delta(I)\cup J(R))$. Then $I$ is a $J$-ideal.
	
\end{theorem}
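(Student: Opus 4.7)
The plan is to unpack the definition of a $J$-ideal: given $a,b\in R$ with $ab\in I$ and $a\notin J(R)$, I would show that $b\in I$. The $\delta$-$J$-ideal hypothesis on $I$ immediately upgrades $ab\in I$ with $a\notin J(R)$ to $b\in\delta(I)$. In particular $b\in\delta(I)\cup J(R)$, which is precisely the set where the standing assumption $(\delta(I):x)\subseteq\delta((I:x))$ is available. This sets the stage for applying Lemma \ref{lemma2} at $x=b$.

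Next, I would invoke Lemma \ref{lemma2} with $x=b$ to conclude that $(I:b)$ is a $\delta$-$J$-ideal of $R$. Since $I\subseteq(I:b)$ and $I$ is maximal among $\delta$-$J$-ideals of $R$ by assumption, this inclusion forces the dichotomy $(I:b)=I$ or $(I:b)=R$. The second case gives $b\in I$ directly and we are done.

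The first case is where I would derive a contradiction. From $(I:b)=I$ we get $a\in(I:b)=I$. Proposition \ref{prop1} (applied to the $\delta$-$J$-ideal $I$, using the implicit convention $\delta(I)\neq R$ that runs through the paper) tells us $I\subseteq J(R)$, and therefore $a\in J(R)$, contradicting the choice of $a$. Hence only the second case survives, proving $b\in I$ and establishing that $I$ is a $J$-ideal.

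The main obstacle is really just being careful about the side conditions on the expansion: one needs $\delta(I)\neq R$ in order to cite Proposition \ref{prop1}, and $\delta((I:b))\neq R$ in order to cite Lemma \ref{lemma2} cleanly. The latter is painless because if $\delta((I:b))=R$ then $(I:b)$ fails to be a proper ideal, so $(I:b)=R$ and $b\in I$ again. A secondary subtlety is interpreting ``maximal $\delta$-$J$-ideal''; the argument above uses the reading ``maximal element of the poset of proper $\delta$-$J$-ideals of $R$,'' which is the interpretation that makes the maximality step nontrivial and matches the role of $J(R)$ in the hypothesis on $x$.
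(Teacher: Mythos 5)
Your proof is correct in substance, but it takes a genuinely different route from the paper's. The paper applies Lemma~\ref{lemma2} at $x=a$ (the factor outside $J(R)$): $(I:a)$ is then a $\delta$-$J$-ideal containing $I$, maximality gives $(I:a)=I$, and $b\in(I:a)=I$ at once, with no case split and no appeal to Proposition~\ref{prop1}. You instead first upgrade to $b\in\delta(I)$ and apply the lemma at $x=b$, which costs you the extra dichotomy $(I:b)=R$ versus $(I:b)=I$ and the contradiction via $I\subseteq J(R)$. What your choice buys is a better fit with the hypothesis as literally stated: the containment $(\delta(I):x)\subseteq\delta((I:x))$ is assumed for $x\in\delta(I)\cup J(R)$ (reading the double negation at face value), and you verify $b\in\delta(I)$ before invoking it, whereas the paper invokes the lemma at $x=a$ without checking that $a$ lies in the relevant set under either reading of the quantifier. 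The paper's route is shorter and avoids Proposition~\ref{prop1} entirely. One inference of yours deserves a caveat: ``$\delta((I:b))=R$ forces $(I:b)=R$'' is false for a general expansion function (e.g.\ $\delta(K)=R$ for every $K$ is a legitimate expansion); it holds only under the standing convention, used in parts of the paper but not stated in Theorem~\ref{th6}, that $\delta$ carries proper ideals to proper ideals. You flag the analogous issue for Proposition~\ref{prop1}, and the paper's own proof is no more careful on these side conditions, so this is an imprecision inherited from the source rather than a gap in your argument.
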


\begin{proof}
	Suppose that $I$ is a maximal $\delta$-$J$-ideal of $R$. Let $a, b\in R$ such that $ab\in I$ and $a\notin J(R)$. Then $(I: a)$ is a $\delta$-$J$-ideal of $R$ by Lemma \ref{lemma2}. Since $I$ is a maximal $\delta$-$J$-ideal and $I\subseteq (I: a)$, then $b\in (I: a)= I$. Therefore, $I$ is a $J$-ideal of $R$.
\end{proof}	

Now, we can propose the following results.
\begin{corallary}
	Let $R$ be a ring, $\delta(I)\neq R$ for all proper ideal of $R$, and $(\delta(I): x)\subseteq \delta((I: x))$ for all $x\notin R\setminus(\delta(I)\cup J(R))$. Then the following statements are equivalent:
	\begin{enumerate}[(1)]
		\item $J(R)$ is a $J$-ideal of $R$.
		\item $J(R)$ is a $\delta$-$J$-ideal  of $R$.
		\item $J(R)$ is a prime ideal of $R$.
	\end{enumerate}
\end{corallary}

\begin{proof}
	(1)$\Leftrightarrow$(3): Clearly.
	
	(1)$\Rightarrow$(2): Clearly.
	
	(2)$\Rightarrow$(1): For any $\delta$-$J$-ideal of $R$, we have $I\subseteq J(R)$. Thus $J(R)$ is the maximal $\delta$-$J$-ideal of $R$. Then $J(R)$ is a $J$-ideal by Theorem $\ref{th6}$.
\end{proof}

\begin{proposition}
	Let $\delta$ be an expansion function of  $\mathcal{I}(R)$ and $I$ be a proper ideal of $R$ with $\delta(\delta(I))= \delta(I)$. Then the followings are holds:
	
	\begin{enumerate}[(1)]
		\item If $I$ is a $\delta$-$J$-ideal and $a\notin J(R)$, then $\delta((I: a))= \delta(I)$.
		\item $\delta(I)$ is a $J$-ideal if and only if $\delta(I)$ is a $\delta$-$J$-ideal.
		\item If $IK = JK $ and $I, J$ are $\delta$-$J$-ideals of $R$ with $\delta(\delta(J))= \delta(J)$ and $K \cap (R-J(R)) \neq \emptyset$ for some ideal $K$ of $R$, then $\delta(I)= \delta(J)$.
		\item If $IK$ and $I$ are $\delta$-$J$-ideals of $R$ with $\delta(\delta(IK))= \delta(IK)$ and $K\cap (R-J(R))\neq \emptyset$ for some ideal $K$ of $R$, then $\delta(IK)= \delta(I)$.
	\end{enumerate}
\end{proposition}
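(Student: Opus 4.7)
My plan is to prove all four assertions by converting the $\delta$-$J$-ideal property into ideal inclusions, applying $\delta$ once, and invoking the stated idempotency assumption $\delta(\delta(\cdot))=\delta(\cdot)$ to close up. For (1), I would observe that $I\subseteq (I:a)$ gives $\delta(I)\subseteq\delta((I:a))$ by monotonicity. For the reverse inclusion, each $b\in(I:a)$ satisfies $ab\in I$; since $I$ is a $\delta$-$J$-ideal and $a\notin J(R)$, the definition forces $b\in\delta(I)$, so $(I:a)\subseteq\delta(I)$. Applying $\delta$ and using $\delta(\delta(I))=\delta(I)$ yields $\delta((I:a))\subseteq\delta(I)$. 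For (2), I would just unpack the two definitions: an ideal $L$ is a $J$-ideal iff $ab\in L$ implies $a\in J(R)$ or $b\in L$, whereas $L$ is a $\delta$-$J$-ideal iff the same implication holds with $b\in\delta(L)$. Taking $L=\delta(I)$, the hypothesis $\delta(\delta(I))=\delta(I)$ forces $\delta(L)=L$, so the two conditions collapse onto each other.

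For (3), I would pick $k\in K\setminus J(R)$. For any $j\in J$, the membership $jk\in JK=IK\subseteq I$ together with the $\delta$-$J$-ideal property of $I$ and $k\notin J(R)$ forces $j\in\delta(I)$, so $J\subseteq\delta(I)$ and $\delta(J)\subseteq\delta(\delta(I))$. By the symmetric argument using that $J$ is a $\delta$-$J$-ideal, $I\subseteq\delta(J)$ and hence $\delta(I)\subseteq\delta(\delta(J))=\delta(J)$. Combining the two chains with the idempotency assumption should then deliver $\delta(I)=\delta(J)$. Part (4) follows the same pattern and is in fact cleaner: $IK\subseteq I$ gives $\delta(IK)\subseteq\delta(I)$ by monotonicity; conversely, every $i\in I$ satisfies $ik\in IK$, and the $\delta$-$J$-ideal property of $IK$ with $k\notin J(R)$ forces $i\in\delta(IK)$, so $I\subseteq\delta(IK)$ and then $\delta(I)\subseteq\delta(\delta(IK))=\delta(IK)$.

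The main obstacle I expect is the bookkeeping in (3). The stated idempotency $\delta(\delta(J))=\delta(J)$ pins down $\delta(I)\subseteq\delta(J)$ cleanly, but turning the symmetric chain $J\subseteq\delta(I)$ into $\delta(J)\subseteq\delta(I)$ naturally invites a matching hypothesis $\delta(\delta(I))=\delta(I)$. I anticipate either appealing to such an analogous condition implicitly, or exploiting the chain $\delta(I)\subseteq\delta(J)=\delta(\delta(I))$ to force equality with a short auxiliary argument. Beyond this point, the four assertions amount to routine applications of Theorem \ref{th1} and the monotonicity of $\delta$.
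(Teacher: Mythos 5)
Your proof is correct and follows essentially the same route as the paper's: one inclusion from monotonicity of $\delta$, the reverse inclusion from the $\delta$-$J$-ideal property applied to a suitable product with an element of $K\setminus J(R)$ (or with $a$), and then the idempotency of $\delta$ to close up. The worry you raise about part (3) is moot: $\delta(\delta(I))=\delta(I)$ is already a standing hypothesis in the proposition's preamble, exactly matching the extra assumption $\delta(\delta(J))=\delta(J)$, and the paper's proof invokes both in precisely the way you describe.
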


\begin{proof}
	(1) Let $I$ be a $\delta$-$J$-ideal and $a\notin J(R)$. Note that $I\subseteq (I: a)$, so $\delta(I)\subseteq \delta((I: a))$.
 Let $x\in (I: a)$. Then $x\in \delta(I)$. Since $xa\in I$ and $a\notin J(R)$. Thus $(I: a)\subseteq \delta(I)$, we get $\delta(I: a)\subseteq \delta(\delta(I))=\delta(I)$. The equality follows.
	
	(2) Clearly.
	
	(3) Since $IK = JK\subseteq I, J$. It follows from $JK \subseteq I$ and $K\cap (R-J(R))\neq \emptyset$ that $J\subseteq \delta(I)$. 
Similarly, $I\subseteq \delta(J)$.  Moreover, $\delta(\delta(I))=\delta(I)$ and $\delta(\delta(J))=\delta(J)$, Thus $\delta(I)=\delta(J)$.
	
	(4) Since $IK\subseteq I$, we have $\delta(IK)\subseteq \delta(I)$. We can also get $I\subseteq \delta(IK)$ since $IK\subseteq IK$.
 Note that $K \cap (R-J(R))\neq \emptyset$, thus $\delta(IK)=\delta(I)$ by assumption.
	
\end{proof}

\begin{proposition}
	Let $\delta $ and $\gamma$ be expansion functions of $R$ and $I$ be a proper ideal of $R$. Then:
	
	\begin{enumerate}[(1)]
		\item If $\delta(I)$ is a $J$-ideal of $R$, then $I$ is a $\delta$-$J$-ideal of $R$. The converse is also true if $\delta=\delta_{1}$.
		\item Suppose that $\delta(I)\subseteq \gamma(I)$ for any ideal $I$ of $R$. If $I$ is a $\delta$-$J$-ideal of $R$, then $I$ is a $\gamma$-$J$-ideal of $R$.
		\item If $\gamma(I)$ is a $\delta$-$J$-ideal of $R$, then $I$ is a $\delta\circ \gamma$-$J$-ideal of $R$.
	\end{enumerate}
\end{proposition}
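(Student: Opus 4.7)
The plan is to handle each of the three parts directly from the $\delta$-$J$-ideal definition, exploiting that an expansion satisfies $I\subseteq \delta(I)$ and is monotone.

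For (1), I would start with the easier forward direction: assume $\delta(I)$ is a $J$-ideal and take $ab\in I$. Since $I\subseteq \delta(I)$, we have $ab\in \delta(I)$, so the $J$-ideal property of $\delta(I)$ yields $a\in J(R)$ or $b\in \delta(I)$, giving exactly the $\delta$-$J$-ideal condition for $I$. For the converse when $\delta=\delta_{1}$, I would suppose $I$ is a $\delta_{1}$-$J$-ideal and show $\sqrt{I}$ is a $J$-ideal: given $ab\in \sqrt{I}$, pick $n$ with $a^{n}b^{n}=(ab)^{n}\in I$, apply the hypothesis to obtain $a^{n}\in J(R)$ or $b^{n}\in \sqrt{I}$, and then conclude $a\in J(R)$ or $b\in \sqrt{I}$. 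The step that needs a brief justification is passing from $a^{n}\in J(R)$ to $a\in J(R)$; I would note that $J(R)$ is an intersection of maximal (hence prime) ideals, so any element whose power lies in $J(R)$ must itself lie in $J(R)$. This is the only slightly non-routine point in the whole proposition.

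For (2), I would simply take $ab\in I$, invoke the $\delta$-$J$-ideal hypothesis on $I$ to get $a\in J(R)$ or $b\in \delta(I)$, and then use the containment $\delta(I)\subseteq \gamma(I)$ in the second case to land in $\gamma(I)$. This gives the $\gamma$-$J$-ideal condition.

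For (3), I would take $ab\in I$; since $\gamma$ is an expansion, $ab\in I\subseteq \gamma(I)$. Because $\gamma(I)$ is a $\delta$-$J$-ideal, either $a\in J(R)$ or $b\in \delta(\gamma(I))=(\delta\circ\gamma)(I)$, which is precisely what is needed to conclude that $I$ is a $\delta\circ\gamma$-$J$-ideal. Thus the main obstacle is only the radical/Jacobson interplay in the converse of (1); the remaining arguments are one-line applications of the definition together with monotonicity and the containment $I\subseteq \delta(I)$.
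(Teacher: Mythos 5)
Your proposal is correct and follows essentially the same route as the paper: the forward direction of (1) and parts (2), (3) are the same one-line applications of the definition, and your converse of (1) is the paper's argument stated directly rather than contrapositively, with both hinging on the fact that $J(R)$ equals its own radical (so $a^{n}\in J(R)$ forces $a\in J(R)$). No gaps.
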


\begin{proof}
	(1) Suppose that $ab\in I$ and $a\notin J(R)$ for some $a, b \in R$. Since $I\subseteq \delta(I)$ and $\delta(I)$ is a $J$-ideal, we get $b\in \delta(I)$. Thus $I$ is a $\delta$-$J$-ideal of $R$. Conversely, suppose that $ab\in \delta_{1}(I)$ with $a\notin J(R)$ for some $a, b\in R$, then $(ab)^{n}= a^{n}b^{n}\in I$ for some $n\geq 1$, and clearly $a^{n}\notin J(R)$ because $\delta_{1}(J(R))=J(R)$ (see \cite{MF}). Since $I$ is a $\delta_{1}$-$J$-ideal, we have $b^n\in \delta_{1}(I)$. Thus $b\in \delta_{1}(I)$, as required.
	
	(2) Clearly.
	
	(3) Assume that $\gamma(I)$ is a $\delta$-$J$-ideal of $R$. Let $ab\in I$ for some $a, b \in R$ and $a\notin J(R)$. Since $I \subseteq\gamma(I)$, we have $ab\in \gamma(I)$. Since $\gamma(I)$ is a $\delta$-$J$-ideal of $R$, $b\in \delta(\gamma(I))= \delta \circ \gamma(I)$. 	
\end{proof}

\begin{proposition}
	Let $I, J$ and $K$ be proper ideals of $R$ with $J\subseteq K\subseteq I$. If $I$ is a $\delta$-$J$-ideal of $R$, and $\delta(J)= \delta(I)$, then $K$ is a $\delta$-$J$-ideal of $R$.
\end{proposition}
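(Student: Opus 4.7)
The plan is a short, direct verification of the definition using two ingredients: the hypothesis that $I$ is a $\delta$-$J$-ideal (to get into $\delta(I)$), and the monotonicity of $\delta$ together with the equality $\delta(J)=\delta(I)$ (to transport the conclusion from $\delta(I)$ into $\delta(K)$).

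First I would fix $a,b\in R$ with $ab\in K$ and suppose $a\notin J(R)$; the goal is to show $b\in \delta(K)$. Since $K\subseteq I$, we have $ab\in I$, and because $I$ is a $\delta$-$J$-ideal of $R$, it follows immediately that $b\in \delta(I)$.

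Next I would use the hypothesis $\delta(J)=\delta(I)$ to rewrite this as $b\in \delta(J)$. Since $J\subseteq K$ and $\delta$ is an ideal expansion (hence monotone), we have $\delta(J)\subseteq \delta(K)$, so $b\in \delta(K)$, which is exactly what we need. The ideal $K$ is proper by assumption, so this verifies the definition of a $\delta$-$J$-ideal for $K$.

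There is no serious obstacle here; the only subtlety to keep in mind is that the chain $J\subseteq K$ together with monotonicity of $\delta$ is what makes the equality $\delta(J)=\delta(I)$ useful, since without it one would only know $b\in \delta(I)$ and not $b\in \delta(K)$.
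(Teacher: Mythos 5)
Your proof is correct and follows essentially the same route as the paper's: pass from $ab\in K$ to $ab\in I$, apply the $\delta$-$J$-ideal property of $I$ to get $b\in\delta(I)$, and use $\delta(J)=\delta(I)$ together with monotonicity of $\delta$ on $J\subseteq K$ to land in $\delta(K)$. The paper phrases this by noting $\delta(J)=\delta(K)=\delta(I)$ outright, but the argument is the same.
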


\begin{proof}
Assume that $I$ is a $\delta$-$J$-ideal of $R$ and $\delta(J)= \delta(I)$. Let $ab\in K$ for some $a, b\in R$. Then by Theorem \ref{th1}, $a\in J(R)$ or $b\in \delta(K)$ since $K\subseteq I$ and $\delta(J) = \delta(I)= \delta(K)$. Thus, $K$ is a $\delta$-$J$-ideal of $R$.
\end{proof}

An expansion $\delta$ is said to be a intersection preserving function if $\delta(I)\cap\delta(J)= \delta(I\cap J)$ for any $I, J\in \mathcal{I}(R)$, see \cite{DZ} for more details.
\begin{proposition}
	Let $\delta$ be an ideal expansion which preserves intersection. Then the following statements are hold:
	
	\begin{enumerate}[(1)]
		\item If $I_{1}, I_{2}, \ldots, I_{n}$ are $\delta$-$J$-ideals of $R$, then $I= \bigcap_{i=1}^{n} I_{i}$ is a $\delta$-$J$-ideal of $R$.
		\item Let $I_{1}, I_{2}, \ldots, I_{n}$ be of $R$ such that $\delta(I_{i})'s$ are non-comparable prime ideals of each other of $R$. If $I= \bigcap_{i=1}^{n} I_{i}$ is a $\delta$-$J$-ideal of $R$, then $I_{i}$ is a $\delta$-$J$-ideal of $R$ for all $i = 1, 2, \cdots, n$.
	\end{enumerate}
\end{proposition}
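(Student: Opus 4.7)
The plan for part (1) is the direct unpacking of definitions. I would take $a,b\in R$ with $ab\in I=\bigcap_{i=1}^{n} I_{i}$ and $a\notin J(R)$; since $ab\in I_{i}$ for every $i$ and each $I_{i}$ is a $\delta$-$J$-ideal, I obtain $b\in\delta(I_{i})$ for all $i$, so $b\in\bigcap_{i=1}^{n}\delta(I_{i})=\delta\bigl(\bigcap_{i=1}^{n}I_{i}\bigr)=\delta(I)$ by the intersection-preserving hypothesis on $\delta$. This reduces to checking a containment through the single property that $\delta$ commutes with finite intersections.

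For part (2), the approach is to fix an index $i$, take $a,b\in R$ with $ab\in I_{i}$ and $a\notin J(R)$, and aim to show $b\in\delta(I_{i})$. The key idea is to inflate $ab$ by a carefully chosen element so that the product lands inside the full intersection $I$, allowing me to invoke the assumption that $I$ is a $\delta$-$J$-ideal. Concretely, for each $j\neq i$ I would select $c_{j}\in I_{j}\setminus\delta(I_{i})$ and form $c=\prod_{j\neq i}c_{j}$; then $c$ lies in every $I_{j}$ with $j\neq i$, while primality of $\delta(I_{i})$ ensures $c\notin\delta(I_{i})$. Hence $(ab)c\in I_{i}\cap\bigcap_{j\neq i}I_{j}=I$, and rewriting this as $a(bc)\in I$ with $a\notin J(R)$ lets me apply the $\delta$-$J$-ideal property of $I$ to conclude $bc\in\delta(I)\subseteq\delta(I_{i})$; primality of $\delta(I_{i})$ together with $c\notin\delta(I_{i})$ then forces $b\in\delta(I_{i})$.

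The main obstacle is justifying the selection $c_{j}\in I_{j}\setminus\delta(I_{i})$, because the hypothesis supplies non-comparability only at the level of the expansions $\delta(I_{k})$, not of the $I_{k}$ themselves. I plan to handle this by observing that an inclusion $I_{j}\subseteq\delta(I_{i})$ would, upon applying $\delta$ and invoking the standard idempotency $\delta(\delta(I_{i}))=\delta(I_{i})$ available for the usual expansions (in particular $\delta_{0}$ and $\delta_{1}$, and already used in the previous proposition), yield $\delta(I_{j})\subseteq\delta(I_{i})$, contradicting the non-comparability of $\delta(I_{j})$ and $\delta(I_{i})$. With that technicality dispatched, the argument is the standard prime-avoidance trick adapted to the $\delta$-$J$ setting.
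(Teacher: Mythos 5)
Your argument for both parts coincides with the paper's. For (1) the paper runs the contrapositive (assume $b\notin\delta(I)=\bigcap_{i}\delta(I_i)$, so $b\notin\delta(I_k)$ for some $k$, hence $a\in J(R)$), while you argue directly; these are the same proof. For (2) the paper likewise multiplies $ab$ by an element $x\in\bigl(\prod_{j\neq k}I_j\bigr)\setminus\delta(I_k)$, applies the $\delta$-$J$-property of $I$, and finishes with primality of $\delta(I_k)$, exactly as you do with $c=\prod_{j\neq i}c_j$.

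The one point of divergence is the step you correctly single out as the main obstacle: why an element of $\prod_{j\neq i}I_j$ outside $\delta(I_i)$ exists at all. The paper simply writes ``choose $x\in\bigl(\prod_{j\neq k}I_j\bigr)\setminus\delta(I_k)$'' with no justification, so you are being more careful than the source. However, your patch does not close the gap under the stated hypotheses: ruling out $I_j\subseteq\delta(I_i)$ via $\delta(I_j)\subseteq\delta(\delta(I_i))=\delta(I_i)$ requires the idempotency $\delta\circ\delta=\delta$, which is \emph{not} assumed in this proposition (it is an extra hypothesis in the neighboring proposition, and holds for $\delta_0,\delta_1$, but not for an arbitrary intersection-preserving expansion). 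So as written, part (2) of your proof --- like the paper's --- is only valid under the additional assumption that $\delta$ is idempotent (or under some other hypothesis guaranteeing $I_j\not\subseteq\delta(I_i)$ for $j\neq i$); you should either add that assumption explicitly or find a different justification for the selection of the $c_j$.
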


\begin{proof}
	(1) Let $ab\in I$ and $b\notin \delta(I)$ for some $a, b \in R$. Since $\delta(I)= \bigcap_{i=1}^{n}\delta(I_{i})$, we have $b\notin \delta(I_{k})$ for some $k\in \{1, \ldots , n\}$. It follows that $a\in J(R)$. Thus $I$ is a $\delta$-$J$-ideal.
	
	(2) Suppose that $ab \in I_{k}$ and $a\notin J(R)$ for some $k\in \left\{1, \ldots , n\right\}$. Choose $x\in \left(\prod_{i=1,i\neq k}^{n}I_{i} \right)\setminus\delta(I_{k})$. Hence $abx\in \bigcap_{i=1}^{n}I_{i}$. Since $ \bigcap_{i=1}^{n}I_{i}$ is a $\delta$-$J$-ideal, we have $bx\in \delta( \bigcap_{i=1}^{n}I_{i})= \bigcap_{i=1}^{n}\delta(I_{i})\subseteq \delta(I_{k})$. Note that $\delta(I_{k})$ is prime, thus $b\in \delta(I_{k})$.	
\end{proof}

Let $R$ and $S$ be two commutative rings and $\delta, \gamma$ be expansion functions of $\mathcal{I}(R), \mathcal{I}(S)$, respectively. If $\delta(f^{-1}(J))= f^{-1}(\gamma(J))$ for all ideal $J$ of $S$ with ring homomorphism $f : R\rightarrow S$, then we call $f$ is a $\delta\gamma$-homomorphism.

\begin{proposition}
	Let $f : R\rightarrow S$ be a  $\delta\gamma$-homomorphism, wher $\delta$ and $\gamma$ are expansion functions of $\mathcal{I}(R)$ and $\mathcal{I}(S)$, respectively. Then the following hold:
	\begin{enumerate}[(1)]
		\item Let $f$ be a monomorphism. If $J$ is a $\gamma$-$J$-ideal of $S$. Then $f^{-1}(J)$ is a $\delta$-$J$-ideal of $R$.
		\item Suppose that $f$ is an epimorphism and $I$ is a proper ideal of $R$ with $\ker f\subseteq I$. If $I$ is a $\delta$-$J$-ideal of $R$, then $f(I)$ is a $\gamma$-$J$-ideal of $S$.
	\end{enumerate}
\end{proposition}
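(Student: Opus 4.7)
The plan is to reduce both parts to a direct verification of the defining condition of a $\delta$-$J$-ideal (resp.\ $\gamma$-$J$-ideal) and to use the bridging identity $\delta(f^{-1}(K))=f^{-1}(\gamma(K))$ exactly once to transport expansion data across $f$. In each part the extra structural hypothesis on $f$ (injective in (1), surjective with $\ker f\subseteq I$ in (2)) is what allows the Jacobson-radical condition to be carried between $R$ and $S$.

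For (1) I will take $a,b\in R$ with $ab\in f^{-1}(J)$, so $f(a)f(b)\in J$. Because $J$ is a $\gamma$-$J$-ideal of $S$, either $f(a)\in J(S)$ or $f(b)\in \gamma(J)$. In the second case, $b\in f^{-1}(\gamma(J))=\delta(f^{-1}(J))$ by the $\delta\gamma$-homomorphism hypothesis, which is exactly the desired conclusion. In the first case one needs $a\in J(R)$, and this step will use the monomorphism assumption to transfer membership in $J(S)$ back to membership in $J(R)$; I should also check that $f^{-1}(J)$ is proper, which follows from $f(1)=1\notin J$.

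For (2) I will pick $x,y\in S$ with $xy\in f(I)$ and use surjectivity of $f$ to write $x=f(a)$, $y=f(b)$ for some $a,b\in R$. Then $f(ab)\in f(I)$, and since $\ker f\subseteq I$ one has $f^{-1}(f(I))=I$, so $ab\in I$. Applying the $\delta$-$J$-ideal property of $I$ gives $a\in J(R)$ or $b\in \delta(I)$. In the first case, every maximal ideal of $S$ contracts to a maximal ideal of $R$ containing $\ker f$, hence $f(J(R))\subseteq J(S)$, and so $x=f(a)\in J(S)$. In the second case, the $\delta\gamma$-identity yields $\delta(I)=\delta(f^{-1}(f(I)))=f^{-1}(\gamma(f(I)))$, so $y=f(b)\in \gamma(f(I))$. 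I would also verify $f(I)\neq S$: if $1\in f(I)$ then $1=f(i)$ for some $i\in I$, so $1-i\in \ker f\subseteq I$ forces $1\in I$, contradicting properness of $I$.

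The main obstacle is the implication $f(a)\in J(S)\Rightarrow a\in J(R)$ in (1): this does not hold for an arbitrary ring monomorphism (for instance $\mathbb{Z}\hookrightarrow \mathbb{Z}_{(p)}$ has $p\in f^{-1}(J(\mathbb{Z}_{(p)}))\setminus J(\mathbb{Z})$), so the argument rests delicately on how maximal ideals of $S$ pull back to maximal ideals of $R$ under an injection, and on whatever implicit Jacobson-radical compatibility the paper is invoking here. Part (2), by contrast, proceeds cleanly because surjections behave very well with respect to maximal ideals.
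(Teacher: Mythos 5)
Your argument follows the paper's proof line for line in both parts. For (2) you are actually more careful than the paper: the paper asserts ``$x\notin J(R)$ from $a\notin J(S)$'' without comment, whereas you supply the correct justification (maximal ideals of $S$ contract to maximal ideals of $R$ under a surjection, so $f(J(R))\subseteq J(S)$), and you also check properness of $f(I)$; your use of $\delta(I)=f^{-1}(\gamma(f(I)))$ to conclude $f(b)\in\gamma(f(I))$ is a clean substitute for the paper's unexplained identity $\gamma(f(I))=f(\delta(I))$.

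The obstacle you flag in (1) is genuine, and it is the paper's gap, not yours: the paper disposes of it with the bare assertion ``if $f(a)\in J(S)$, then $a\in J(R)$ since $f$ is a monomorphism,'' which is false for general monomorphisms. Your example $\mathbb{Z}\hookrightarrow\mathbb{Z}_{(p)}$ in fact upgrades to a counterexample to the proposition itself: take $\delta=\gamma=\delta_{0}$ (the identity expansion, so the $\delta\gamma$-homomorphism condition is automatic) and $J=p^{2}\mathbb{Z}_{(p)}$. Then $J$ is a $\gamma$-$J$-ideal of $\mathbb{Z}_{(p)}$, since any element outside $J(\mathbb{Z}_{(p)})=p\mathbb{Z}_{(p)}$ is a unit; but $f^{-1}(J)=p^{2}\mathbb{Z}$ is not a $J$-ideal of $\mathbb{Z}$, because $p\cdot p\in p^{2}\mathbb{Z}$ while $p\notin J(\mathbb{Z})=0$ and $p\notin p^{2}\mathbb{Z}$. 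So part (1) requires an additional hypothesis such as $f^{-1}(J(S))\subseteq J(R)$ (which does hold, e.g., when $f$ is also surjective); with that hypothesis added, your argument closes and the rest of your write-up is correct.
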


\begin{proof}
	(1) Let $ab\in f^{-1}(J)$ for $a, b\in R$. Then $f(ab)= f(a)f(b)\in J$, which implies $f(a)\in J(S)$ or $f(b)\in \gamma(J)$. 
If $f(a)\in J(S)$, then $a\in J(R)$ since $f$ is a monomorphism. 
If $f(b)\in \gamma(J)$, then we have $b\in f^{-1}(\gamma(J))= \delta(f^{-1}(J))$ since $f$ is a $\delta\gamma$-homomorphism.
 Thus $f^{-1}$ is a $\delta$-$J$-ideal of $R$.
	
	(2) Suppose that $a, b\in S$ with $ab\in f(I)$ and $a\notin J(S)$. Since $f$ is an epimorphism, there exist $x, y \in R$ such that $a= f(x), b=f(y)$. 
Thus we have $x\notin J(R)$ from $a\notin J(S)$. Since $\ker f \subseteq I$, $ab= f(xy)\in f(I)$ implies that $xy\in I$. Thus $y\in \delta(I)$ and $b=f(y)\in f(\delta(I))$. On the other hand $\gamma(f(I))=f(\delta(I))$, we have $b\in \gamma(f(I))$. Thus $f(I)$ is a $\gamma$-$J$-ideal of $S$.
	
\end{proof}

Let $\delta$ be an expansion function of $\mathcal{I}(R)$, $I$ be an ideal of $R$. Then the function $\delta_{q} : R\slash I \rightarrow R\slash I$ defined by $\delta_{q}(J\slash I)=\delta(J)\slash I$ for all ideals $I\subseteq J$, becomes an expansion function of $\mathcal{I}(R\slash I)$.

\begin{corallary}
	Let $\delta$ be an expansion function of $\mathcal{I}(R)$ and $J\subseteq I$ be proper ideals of $R$. Then the followings hold:
	\begin{enumerate}[(1)]
		\item If $I$ is a $\delta$-$J$-ideal of $R$, then $I\slash J$ is a $\delta_{q}$-$J$-ideal of $R\slash J$.
		\item If $I\slash J$ is a $\delta_{q}$-$J$-ideal of $R\slash J$ and $J\subseteq J(R)$, then $I$ is a $\delta$-$J$-ideal of $R$.
		\item If $I\slash J$ is a $\delta_{q}$-$J$-ideal of $R\slash J$, and $J$ is a $\delta$-$J$-ideal of $R$ where $\delta(J)\neq R$, then $I$ is a $\delta$-$J$-ideal of $R$.
		\item Let $S$ be a subring of $R$ with $S\nsubseteq I$. Then $S\cap I$ is a $\delta$-$J$-ideal of $R$.
		
	\end{enumerate}
\end{corallary}

\begin{proof}
	(1) Take the natural homomorphism $\pi : R\rightarrow R\slash J$. Then we have that $I\slash J$ is a $\delta_{q}$-$J$-ideal of $R\slash J$ since $\ker \pi\subseteq I$.
	
	(2) Let $I\slash J$ be a $\delta_{q}$-$J$-ideal of $ R\slash J$ and $J\subseteq J(R)$. Assume that $ab\in I$ and $a\notin J(R)$ for some $a, b\in R$. Then $ab+J=(a+J)(b+J)\in I\slash J$ and $a+J\notin J(R\slash J)$ by assumption, $b+J\in \delta_{q}(I\slash J)= \delta(I)\slash J$, that is $b\in \delta(I)$.
	
	(3) Clearly.
	
	(4) The result follows just from taking the injection $i : S\rightarrow R$ defined by $i(a)=a$ for every $a\in S$. 
\end{proof}

A proper ideal $I$ of a ring $R$ is called a superfluous ideal of $R$ if there is no proper ideal $J$ of $R$ such that $I + J= R$.

\begin{lemma}\label{lemma2.7}
	Any $\delta$-$J$-ideal of a ring $R$ with $\delta(I)\neq R$ is superfluous.
\end{lemma}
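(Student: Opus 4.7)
The plan is to reduce the claim to the standard fact that any ideal contained in the Jacobson radical is superfluous, and to obtain the containment $I\subseteq J(R)$ directly from Proposition \ref{prop1}.

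First, I would invoke Proposition \ref{prop1}: since $I$ is a $\delta$-$J$-ideal and $\delta(I)\neq R$, the hypotheses of that proposition are met, so $I\subseteq J(R)$. This is the only place where the $\delta$-$J$-ideal hypothesis is really used; from here on, the argument relies only on $I\subseteq J(R)$ together with the definitions of $J(R)$ and of a superfluous ideal.

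Next, I would argue by contradiction. Suppose there exists a proper ideal $J$ of $R$ with $I+J=R$. Since $J$ is proper, it is contained in some maximal ideal $M$ of $R$. By the definition of the Jacobson radical as the intersection of all maximal ideals, $I\subseteq J(R)\subseteq M$. Combining these inclusions gives $I+J\subseteq M\subsetneq R$, contradicting $I+J=R$.

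Therefore no proper ideal $J$ can satisfy $I+J=R$, and $I$ is superfluous by definition. I do not anticipate any real obstacle here — the content of the lemma is essentially that Proposition \ref{prop1} has already been proved, and the rest is a two-line application of the maximal-ideal characterization of $J(R)$.
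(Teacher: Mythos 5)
Your proof is correct and takes essentially the same route as the paper: both reduce the claim to the containment $I\subseteq J(R)$ (Proposition \ref{prop1}) and then derive a contradiction from $I+J=R$. The only cosmetic difference is that the paper writes $1=a+b$ with $a\in I$ and uses the unit characterization of the Jacobson radical to conclude that $b$ is a unit (forcing $J=R$), whereas you use the maximal-ideal characterization; the two arguments are standard and interchangeable.
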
	
\begin{proof}
	Let $I$ be a $\delta$-$J$-ideal fo $R$ with $\delta(I)\neq R$. Assume that there exists a proper ideal $J$ of $R$ with $I+ J =   R$. Then $1= a + b$ for some $a\in I, b\in J$ and so $1-b\in I\subseteq J(R)$. Thus $b\in J$ is an unit and we get $J= R$, a contradiction.	
\end{proof}

\begin{proposition}
	Let $I$ and $J$ be $\delta$-$J$-ideals of $R$ such that $\delta(I)\neq R$, $\delta(J)\neq R$. Then $I+ J $ is a $\delta$-$J$-ideal of $R$.
\end{proposition}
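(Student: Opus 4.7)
The plan is to leverage Proposition \ref{prop1} to place $I$, $J$, and hence $I+J$ inside the Jacobson radical, then use the ideal-form characterization from Theorem \ref{th1} to handle the key computation with the decomposition $ab = x + y$. Since $\delta(I) \ne R$ and $\delta(J) \ne R$, Proposition \ref{prop1} yields $I, J \subseteq J(R)$, so $I+J \subseteq J(R)$; in particular every maximal ideal of $R$ contains $I + J$, giving $J(I+J) = J(R)$. By the converse direction of Proposition \ref{prop1} it then suffices to verify the defining implication directly: whenever $a, b \in R$ satisfy $ab \in I+J$ and $a \notin J(R)$, we must produce $b \in \delta(I+J)$.

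To that end, I would write $ab = x + y$ with $x \in I$ and $y \in J$, and compute $a \cdot (bI) = (ab) I = (x+y) I \subseteq xI + yI \subseteq I$, using that $x \in I$ and $I$ is an ideal. By the ideal form of Theorem \ref{th1} applied to the $\delta$-$J$-ideal $I$, together with $a \notin J(R)$, this gives $bI \subseteq \delta(I)$. A symmetric calculation yields $bJ \subseteq \delta(J)$, so $b(I+J) \subseteq \delta(I) + \delta(J) \subseteq \delta(I+J)$; specializing to the chosen $x, y$, we obtain $bx + by = b \cdot ab \in \delta(I+J)$, i.e., $ab^2 \in \delta(I+J)$.

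The final step, passing from $b(I+J) \subseteq \delta(I+J)$ (equivalently $ab^2 \in \delta(I+J)$ with $a \notin J(R)$) to $b \in \delta(I+J)$ itself, is the main obstacle. I expect to close it by a Nakayama-type argument that exploits the superfluity of $I+J$ established in Lemma \ref{lemma2.7}: the relation $b \cdot \overline{I+J} = 0$ in $R/\delta(I+J)$, combined with $I+J \subseteq J(R)$ and the hypothesis $a \notin J(R)$, should force the image of $b$ to vanish modulo $\delta(I+J)$. This is the delicate point where the two $\delta$-$J$-ideal hypotheses on $I$ and $J$ — rather than just containment in $J(R)$ — are both needed.
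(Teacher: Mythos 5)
There is a genuine gap, and it sits exactly where you flag it: the passage from $ab^{2}\in\delta(I+J)$ to $b\in\delta(I+J)$ is the entire content of the proposition, and nothing you do before that point supplies any leverage. In fact every containment in your second paragraph is vacuously true and uses neither the decomposition $ab=x+y$ nor the hypothesis that $I$ and $J$ are $\delta$-$J$-ideals: $(ab)I\subseteq I$ holds for \emph{any} element $ab$ of $R$ simply because $I$ is an ideal, so applying Theorem \ref{th1}(2) with $K=bI$ only returns $bI\subseteq\delta(I)$, which you already knew from $bI\subseteq I\subseteq\delta(I)$. Likewise $b(I+J)\subseteq I+J\subseteq\delta(I+J)$ and $ab^{2}=(ab)b\in I+J\subseteq\delta(I+J)$ are automatic consequences of $ab\in I+J$. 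So when you reach the ``final step'' you hold no information about $b$ beyond the original hypothesis. The proposed Nakayama-type closure cannot rescue this: the relation $b\cdot\overline{I+J}=0$ in $R/\delta(I+J)$ is empty because $\overline{I+J}$ is already zero there (as $I+J\subseteq\delta(I+J)$), and superfluity of $I+J$ (Lemma \ref{lemma2.7}) only constrains ideals $J'$ with $(I+J)+J'=R$; it says nothing about the image of $b$ modulo $\delta(I+J)$.

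The structural obstacle is that an element of $I+J$ is a sum $x+y$ and does not factor, so no elementwise manipulation of $ab$ inside $I$ or $J$ separately will produce $b\in\delta(I+J)$. The paper instead transfers the property through quotients: $I+J$ is proper by Lemma \ref{lemma2.7}, $I\cap J$ is a $\delta$-$J$-ideal, $I/(I\cap J)$ is a $\delta_{q}$-$J$-ideal of $R/(I\cap J)$, and the isomorphism $I/(I\cap J)\cong(I+J)/J$ pulls the property over to $(I+J)/J$ and hence back to $I+J$ via the quotient correspondence. If you want to repair your argument, replace the computation with a step of this kind --- some mechanism that genuinely combines the two separate hypotheses on $I$ and $J$ into a statement about their sum. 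Your first paragraph, reducing to the defining implication via Proposition \ref{prop1}, is fine and can be kept.
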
	

\begin{proof}
	Let $I,  J$ be $\delta$-$J$-ideals of $R$ such that $\delta(I)\neq R$, $\delta(J)\neq R$. Since $I, J$ are superfluous by Lemma \ref{lemma2.7}, $I + J \neq R$. Hence $I\cap J$ is a $\delta$-$J$-ideal. Also, $I\slash (I\cap J)$ is a $\delta_{q}$-$J$-ideal of $R\slash (I\cap J)$. Now, by the isomorphism $I\slash(I\cap J) \cong (I  + J)\slash J$, we have $(I  + J)\slash J$ is a $\delta_{q}$-$J$-ideal of $R\slash J$, therefore $I+ J$ is a $\delta$-$J$-ideal of $R$.
\end{proof}

Let $S$ be a multiplicatively closed subset of $R$. Denote by $S^{-1}R$ the ring of fractions of $R$ with respect to $S$.
Note that $\delta_{S}$ is an expansion function of $\mathcal{I}(S^{-1}R)$ such that  $\delta_{S}(S^{-1}I)= S^{-1}(\delta(I))$, where $\delta$ is an expansion function of $R$. Denote $Z_{I}(R)=\{r\in R | rs \in I  \text{ for some } s \in R\setminus I\}$, where $I$ is a proper ideal of $R$. 

\begin{proposition}
	Let $S$ be a multiplicatively closed subset of $R$ and $\delta$ be an expansion function of $R$ and $J(S^{-1}R)= S^{-1}J(R)$. Then 	
	\begin{enumerate}[(1)]
		\item If $I$ is a $\delta$-$J$-ideal of $R$ with $I\cap S  = \emptyset$, then $S^{-1}I $ is a $\delta_{S}$-$J$-ideal of $S^{-1}R$.
		\item  Let $S\cap Z_{J(R)}(R)= S\cap Z_{\delta(I)}(R)= \emptyset$. If $S^{-1}I $ is a $\delta_{S}$-$J$-ideal of $S^{-1}R$, then $I$ is a $\delta$-$J$-ideal of $R$.
	\end{enumerate}
\end{proposition}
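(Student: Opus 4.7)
The plan is to handle both directions by clearing denominators and transferring the $\delta$-$J$-ideal property between $R$ and $S^{-1}R$, invoking the hypothesis $J(S^{-1}R)=S^{-1}J(R)$ together with the two zero-divisor conditions exactly where they are needed.

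For part (1), I would start with fractions $\frac{a}{s_1},\frac{b}{s_2}\in S^{-1}R$ whose product lies in $S^{-1}I$. Writing this product as $\frac{x}{t}$ with $x\in I$ and $t\in S$, the equivalence of fractions yields some $u\in S$ with $(uta)\,b = u s_1 s_2 x \in I$. Since $I$ is a $\delta$-$J$-ideal of $R$, either $b\in\delta(I)$, in which case $\frac{b}{s_2}\in S^{-1}\delta(I)=\delta_S(S^{-1}I)$; or $uta\in J(R)$, in which case $\frac{uta}{1}\in S^{-1}J(R)=J(S^{-1}R)$, so $\frac{a}{s_1}=\frac{uta}{s_1 ut}\in J(S^{-1}R)$ because $J(S^{-1}R)$ is an ideal.

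For part (2), I would take $ab\in I$ with $a\notin J(R)$ and aim to show $b\in\delta(I)$. The image $\frac{a}{1}\cdot\frac{b}{1}$ lies in $S^{-1}I$, so the $\delta_S$-$J$-ideal hypothesis forces $\frac{a}{1}\in J(S^{-1}R)=S^{-1}J(R)$ or $\frac{b}{1}\in\delta_S(S^{-1}I)=S^{-1}\delta(I)$. Clearing denominators in the first alternative produces $w\in S$ with $wa\in J(R)$; since $a\notin J(R)$, this puts $w$ in $Z_{J(R)}(R)\cap S$, contradicting the assumption. Hence the second alternative holds, and clearing denominators there gives $w'\in S$ with $w'b\in\delta(I)$; if $b\notin\delta(I)$ we would likewise get $w'\in Z_{\delta(I)}(R)\cap S$, a contradiction, so $b\in\delta(I)$ as required.

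The main obstacle, as I see it, is the careful deployment of the two zero-divisor hypotheses $S\cap Z_{J(R)}(R)=\emptyset$ and $S\cap Z_{\delta(I)}(R)=\emptyset$: these are precisely what convert the ``up to multiplication by an element of $S$'' conclusions produced by the localization equivalence into genuine membership in $J(R)$ or $\delta(I)$. Everything else is routine bookkeeping with the identities $\delta_S(S^{-1}I)=S^{-1}\delta(I)$ given just before the proposition and $J(S^{-1}R)=S^{-1}J(R)$ from the hypothesis.
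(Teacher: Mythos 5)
Your proof is correct and follows essentially the same route as the paper's: in both parts you clear denominators, apply the $\delta$-$J$ property in $R$ (resp.\ in $S^{-1}R$), and transfer membership back using $\delta_S(S^{-1}I)=S^{-1}\delta(I)$, $J(S^{-1}R)=S^{-1}J(R)$, and the two conditions $S\cap Z_{J(R)}(R)=S\cap Z_{\delta(I)}(R)=\emptyset$ exactly as the paper does. The only cosmetic difference is in part (1), where you attach the extra factor from $S$ to $a$ and push it into $J(S^{-1}R)$, while the paper attaches it to $b$ after first noting $a\notin J(R)$; the two bookkeeping choices are interchangeable.
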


\begin{proof}
	(1) Suppose $\frac{a}{s}\cdot \frac{b}{t} \in S^{-1}I$ and $\frac{a}{s}\notin J(S^{-1}R)$ for some $a, b \in R$ and $s, t \in S$. Then there exists $u\in S$ with $abu\in I$, thus $bu\in \delta(I)$ since $a\notin J(R)$. Hence $\frac{b}{t} = \frac{bu}{tu}\in S^{-1}(\delta(I))= \delta_{S}(S^{-1}I)$. Consequently, $S^{-1}I$ is a $\delta_{S}$-$J$-ideal of $S^{-1}R$.
	
	(2) Let $a, b \in R$ with $ab\in I$. Then $\frac{a}{1}\cdot \frac{b}{1}\in S^{-1}I$, It implies that either $\frac{a}{1}\in J(S^{-1}R)= S^{-1}J(R)$ or $\frac{b}{1}\in \delta_{S}(S^{-1}I)=S^{-1}\delta(I)$. So if $\frac{a}{1}\in S^{-1}J(R)$, then we can deduce that $as \in J(R)$ for some $s\in S$. It follows from $S\cap Z_{J(R)}(R)= \emptyset$ that $a\in J(R)$.
 If $\frac{b}{1}\in S^{-1}\delta(I)$, by a similar disscution, we can get $b\in \delta(I)$. 
\end{proof}

Let $R(+)M$ be the idealization of an $R$-module $M$. For an expansion function $\delta$ of $R$, define $\delta_{(+)}$ as $\delta_{(+)}(I (+) N)= \delta(I)(+)M$ for some ideal $I(+)N$ of $R(+)M$. It is clear that $\delta_{(+)}$ is an expansion function of $R(+)M$.

\begin{proposition}
	Let $I$ be an ideal of a ring $R$ and $N$ be a submodule of an $R$-module $M$. Then $I$ is a $\delta$-$J$-ideal of $R$ if and only if $I(+)N$ is a $\delta_{(+)}$-$J$-ideal of $R(+)M$.
\end{proposition}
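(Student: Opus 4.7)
The plan is to unfold both directions directly from the definitions, using the two standard facts about the idealization $R(+)M$: that multiplication is given by $(r_1,m_1)(r_2,m_2)=(r_1r_2,\,r_1m_2+r_2m_1)$, and that the Jacobson radical satisfies $J(R(+)M)=J(R)(+)M$. Together with the given formula $\delta_{(+)}(I(+)N)=\delta(I)(+)M$, these reduce the statement to an almost mechanical check on first coordinates.

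For the forward implication, I would assume $I$ is a $\delta$-$J$-ideal of $R$ and take arbitrary elements $(a,m),(b,n)\in R(+)M$ with $(a,m)(b,n)\in I(+)N$. The first coordinate of the product is $ab$, so $ab\in I$. If $(a,m)\notin J(R(+)M)=J(R)(+)M$, then necessarily $a\notin J(R)$, and the $\delta$-$J$-ideal property of $I$ forces $b\in\delta(I)$, whence $(b,n)\in\delta(I)(+)M=\delta_{(+)}(I(+)N)$, as required.

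For the converse, I would assume $I(+)N$ is a $\delta_{(+)}$-$J$-ideal of $R(+)M$ and take $a,b\in R$ with $ab\in I$ and $a\notin J(R)$. Considering the elements $(a,0)$ and $(b,0)$, we have $(a,0)(b,0)=(ab,0)\in I(+)N$, and $a\notin J(R)$ ensures $(a,0)\notin J(R)(+)M=J(R(+)M)$. The $\delta_{(+)}$-$J$-ideal hypothesis then gives $(b,0)\in\delta_{(+)}(I(+)N)=\delta(I)(+)M$, whose first coordinate condition is exactly $b\in\delta(I)$.

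There is essentially no obstacle here; the only point that deserves a sentence of justification is the properness of $I(+)N$ (which follows immediately from properness of $I$) and the identification $J(R(+)M)=J(R)(+)M$, which is classical for idealizations and is cited from the references \cite{MF,DDM,JH} listed at the beginning of the section. The whole argument fits naturally in half a page and requires no case analysis beyond the two-way matching of first coordinates.
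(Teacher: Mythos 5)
Your proof is correct and follows essentially the same route as the paper's: a direct coordinatewise check using $J(R(+)M)=J(R)(+)M$ and the formula for $\delta_{(+)}$. In fact your write-up is slightly more careful than the paper's --- you choose the witnesses $(a,0),(b,0)$ so that the product visibly lands in $I(+)N$, and you correctly place the hypothesis $a\notin J(R)$ on the first factor, whereas the paper's version leaves the module components unspecified and misstates which factor avoids the Jacobson radical.
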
	

\begin{proof}
	Let $I$ be a $\delta$-$J$-ideal of $R$. Assume that $(r, m), (s, m')\in I(+)N$ and $(s, m')\notin J(R)(+)M$ for some $(r, m)(s, m')\in R(+)M$. Then $s\in \delta(I)$ since $rs\in I$ and $s\notin J(R)$. Thus $(s, m')\in \delta(I)(+)M= \delta_{(+)}(I(+)N)$. 

Conversely, suppose that $I(+)N$ is a $\delta_{(+)}$-$J$-ideal of $R(+)M$. Let $r, s \in R$ with $rs\in I$ and $s\notin J(R)$. Hence, we get $(r, m)(s, m')\in I(+)N$ and clearly $(s, m')\notin J(R)(+)M$. Therefore, $(r, m)\in \delta_{(+)}(I(+)N)$, and $r\in \delta(I)$, $I$ is a $\delta$-$J$-ideal.
\end{proof}

\end{document}